\documentclass[reqno]{amsart}
\usepackage{amsmath,amsfonts,amsthm,amsopn,color,amssymb,enumitem}
\usepackage{palatino}
\usepackage{graphicx}
\usepackage[english]{babel}
\usepackage{caption}
\usepackage{subcaption}
\usepackage[colorlinks=true]{hyperref}
\hypersetup{urlcolor=blue, citecolor=red, linkcolor=blue}
\usepackage[utf8]{inputenc}

\usepackage{esint}
\usepackage[title]{appendix}

\newtheorem{theorem}{Theorem}[section]
\newtheorem{lemma}[theorem]{Lemma}

\newtheorem{proposition}[theorem]{Proposition}

\def\beq{\begin{equation}}
\def\eeq{\end{equation}}
\def\ba{\begin{array}}
\def\ea{\end{array}}

\def\R{\mathbb R}

\def\e{\varepsilon}


\newcommand{\rmnote}[1]{}


\numberwithin{equation}{section}
\newenvironment{abs}{\textbf{Abstract.}\mbox{  }}{ }
\newenvironment{key words}{\textbf{Keywords}\mbox{  }}{ }

\pagestyle{plain}

\begin{document}



\title{Modica type estimates and curvature results for overdetermined elliptic problems}

\author{David Ruiz}
\address{(D.~Ruiz)
	IMAG, Departamento de An\'alisis matem\'atico, Universidad de Granada,
	Campus Fuente-nueva, 18071 Granada, Spain} \email{daruiz@ugr.es}

\author{Pieralberto Sicbaldi}
\address{(P.~Sicbaldi)
	IMAG, Departamento de An\'alisis matem\'atico,
	Universidad de Granada,
	Campus Fuentenueva,
	18071 Granada,
	Spain \& Aix Marseille Universit\'e - CNRS, Centrale Marseille - I2M, Marseille, France}
\email{pieralberto@ugr.es}

\author{Jing Wu}
\address{(J.~Wu)
	Departamento de An\'alisis matem\'atico, Universidad de Granada,
	Campus Fuente-nueva, 18071 Granada, Spain} \email{jingwulx@correo.ugr.es}

\thanks{{\bf Acknowledgements.} D. R. has been supported by the FEDER-MINECO Grant PID2021-122122NB-I00 and by J. Andalucia (FQM-116).
	P. S has been supported by the FEDER-MINECO Grants PID2020-117868GB-I00 and PID2023-150727NB-I00 and by J. Andalucia Grant P18-FR-4049. J. W. has been supported
	by the China Scholarship Council (CSC201906290013) and by J. Andalucia (FQM-116).
	D. R. and P. S. also acknowledge financial support from the Spanish Ministry of Science and Innovation (MICINN), through the \emph{IMAG-Maria de Maeztu} Excellence Grant CEX2020-001105-M/AEI/10.13039/501100011033.}
\maketitle

\noindent

\noindent
\begin{abs}
In this paper, we establish a Modica type estimate on bounded solutions to the overdetermined elliptic problem
\begin{equation*}
  \begin{cases}
  \Delta u+f(u) =0& \mbox{in $\Omega$, }\\
u>0 &\mbox{in $\Omega$, }\\
  u=0 &\mbox{on $\partial\Omega$, }\\
  \partial_{\nu} u=-\kappa &\mbox{on $\partial\Omega$, }
  \end{cases}
\end{equation*}
where $\Omega\subset\mathbb{R}^{n},n\geq 2$. As we will see, the presence of the boundary changes the usual form of the Modica estimate for entire solutions. We will also discuss the equality case. From such estimates we will deduce information about the curvature of $\partial \Omega$ under a certain condition on $\kappa$ and $f$. The proof uses the maximum principle together with scaling arguments and a careful passage to the limit in the arguments by contradiction.
\end{abs}

\medskip

\noindent
\begin{key words}: Overdetermined boundary conditions; Modica type estimate; maximum principle.
\end{key words}

\medskip

\noindent
\textbf{Mathematics Subject Classification(2020). 35N25, 35B50}
 \indent

\section{Introduction and statement of the results}
\label{Section 1}

Overdetermined elliptic problems in the form
\begin{gather}\label{eq11}
\begin{cases}
  \Delta u+f(u)=0 &\mbox{in $\Omega$, } \\
  u>0 &\mbox{in $\Omega$, } \\
  u=0 &\mbox{on $\partial \Omega$, }\\
  \partial_{\nu} u=-\kappa&\mbox{on $\partial \Omega$, }
  \end{cases}
\end{gather}
have received a lot of attention in the last years. Here $\Omega$ is a regular domain (bounded or not) of $\mathbb{R}^n$, $n\geq2$, $f$ is a Lipschitz continuous function, $\nu$ stands for the outward unit vector about $\partial \Omega$ and $\kappa$ is a constant. Since we ask the function $u$ to be positive, it is clear that $-\kappa$ must be non positive. These problems are called overdetermined due to the presence of two boundary conditions and have various applications because they appear as the equilibrium condition in many physical problems, see \cite{S02} and references therein for more details.

\medskip

The study of these problems started in the 70's, when J. Serrin \cite{S71} proved that if $\Omega$ is bounded, $f$ is $C^1$ and problem (\ref{eq11}) admits a solution, then $\Omega$ must be a ball. The method used by Serrin is universally known as the moving plane method and holds also when $f$ is only Lipschitz continuous \cite{PS07}. The case when $\Omega$ is supposed to be unbounded has been considered in 1997 by H. Berestycki, L. Caffarelli and L. Nirenberg \cite{BCN97}, who stated the following:

\medskip

\textbf{BCN Conjecture} (\cite{BCN97}): If $\Omega$ is a smooth domain, $\mathbb{R}^n\setminus \overline{\Omega}$ is connected, and there exists a bounded solution of \eqref{eq11}, then $\Omega$ is either a ball, a half-space, a generalized cylinder $B^k\times \mathbb{R}^{n-k}$ where $B^k$ is a ball in $\mathbb{R}^k$, or the complement of one of them.

\medskip
It turns out that the BCN conjecture is false, and in the last decade many counterexamples have been built using perturbation theory. For example, for domains with the topology of a cylinder, we have the existence of periodic and rotationally symmetric perturbations of $B_1 \times \mathbb{R}$ that support a solution to the problem \eqref{eq11} when $f(u)=\lambda u, \lambda>0$ or $f$ is such that there exists a nondegenerate solution of \eqref{eq11} in $B_1$, see \cite{S10, SS12, RSW22}. For domains with the topology of a half-space, in $\mathbb{R}^9$ the Bombieri-De Giorgi-Giusti epigraph can be perturbed to obtain nontrivial domains $\Omega$ that support solutions to Problem \eqref{eq11} when $f(u)$ is of Allen-Cahn type (i.e. $f(u)=u-u^3$ or other nonlinearities with similar behavior), see \cite{dPPW15}. For the topology of the complement of a ball, for any $n\geq 2$ there exists a ball $B_R$ of radius $R$ large enough such that $\mathbb{R}^n \backslash B_R$ can be perturbed in domains that support solutions to Problem \eqref{eq11} when $f(u)$ is the Schr\"{o}dinger nonlinearity (i.e. $f(u)=u^p-u$, $p<\frac{n+2}{n-2}$). These new solutions in perturbations of the exterior of a ball produce automatically new solutions in perturbations of the complement of a straight cylinder just by adding one or more empty variables.

\medskip

These results show that the geometry of domains where one can solve \eqref{eq11} is in general richer than expected. And this depends on the geometry of $\Omega$, on the equation (i.e. the function $f$) and in a certain way also on the value of the constant $\kappa$. It is therefore natural to investigate rigidity results establishing that the BCN conjecture is true under suitable assumptions on the geometry of $\Omega$, the constant $\kappa$ or the behavior of the function $f$. Serrin's theorem is a rigidity result in this sense, that holds for any $f$ and any constant $\kappa$, just assuming the boundedness of the domain. In 1997 W. Reichel  \cite{R97} proved that if $f$ is of Allen-Cahn type or identically 0 then the existence of a bounded solution to \eqref{eq11} in an exterior domain $\Omega$ implies that $\Omega$ is the complement of a ball (see \cite{RRS20}). In 2013 A. Ros and the second author \cite{RS13} gave an affirmative answer to the BCN conjecture in dimension 2 under one of the following assumptions: the domain $\Omega$ is narrow (i.e. there exists $R>0$ such that $\Omega$ does not contain any ball of radius $R$), or $f(u) \geq \lambda u$ for some $\lambda >0$. In 2017 A. Ros, the first and second authors \cite{RRS17} proved the BCN conjecture for $n=2$, under the assumptions $\kappa >0$ and $\Omega$ diffeomorphic to a half-plane. In 2018, K. Wang and J. Wei \cite{WW19} proved the BCN conjecture under the assumptions that $\Omega$ is a globally Lipschitz epigraph and $f$ is of Allen-Cahn type, generalizing previous results in \cite{BCN97, FV10}.

\medskip


One of the results of this paper is exactly a rigidity result for overdetermined elliptic problems, that roughly speaking is the following: if $f$ and $\kappa$ satisfies a certain condition then either the mean curvature of $\partial \Omega$ is negative, or $\Omega$ is a half-space. More precisely, if $H(p)$ is the mean curvature of $\partial \Omega$ at $p$, we will prove the following rigidity result:

\begin{theorem}\label{coro}
	Let $u\in C^{3}(\Omega)$ be a bounded solution to the problem (\ref{eq11}), with $f \in C^1$. If there exists a non positive primitive $F$ of $f$ such that
	\begin{equation} \label{cond1} \kappa^{2}+2F(0) \geq 0\,,\end{equation}
	then either $H(p) < 0$ for any $p \in \partial\Omega$, or $\Omega$ is a half-space and $u$ is $1$-dimensional, i.e., there exists $x_0 \in \R^n$, a function $g: [0, +\infty) \rightarrow\mathbb{R}$ and $\vec{a}\in \mathbb{R}^{n}$, such that
	\[  \Omega= \{x \in \R^n:  \ \vec{a} \cdot (x-x_0) >0 \}, \ \mbox{ and }u(x)=g(\vec{a}\cdot (x-x_0)), \ \ x \in \Omega.\]
\end{theorem}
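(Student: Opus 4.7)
The plan is to exploit the Modica-type estimate established earlier in the paper by applying a Hopf boundary point argument to the auxiliary function
\[
P(x):=|\nabla u(x)|^{2}+2F(u(x)),
\]
and then translate the boundary sign of $\partial_{\nu}P$ into a statement about the mean curvature $H$. First observe that on $\partial\Omega$ the function $u$ vanishes while $|\nabla u|=\kappa$, so $P|_{\partial\Omega}=\kappa^{2}+2F(0)$. The Modica-type estimate proved previously, together with hypothesis \eqref{cond1}, yields $P\le\kappa^{2}+2F(0)$ throughout $\Omega$, so $P$ attains its supremum all along $\partial\Omega$. Since $u>0$ inside and vanishes on the boundary, a standard unique continuation argument forces $\kappa>0$, so in particular $\nabla u\ne 0$ on $\partial\Omega$.

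A direct computation using $\Delta u=-f(u)$ gives
\[
\Delta P=2|\nabla^{2}u|^{2}-2f(u)^{2},
\]
which, combined with Kato's inequality $|\nabla^{2}u|^{2}\ge|\nabla|\nabla u||^{2}$ (the key ingredient already used in the Modica-type estimate), produces an elliptic differential inequality of the form $\Delta P+b\cdot\nabla P\ge 0$ with bounded $b$ in a neighborhood of $\partial\Omega$ where $|\nabla u|\ge\kappa/2$. Thus the Hopf boundary lemma applies to $P$ at each point of $\partial\Omega$.

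Next I would compute the outward normal derivative of $P$ on $\partial\Omega$. Since $\nabla u=-\kappa\,\nu$ on the boundary,
\[
\partial_{\nu}P = 2\,(\nabla u)\cdot(\partial_{\nu}\nabla u)+2f(0)\,\partial_{\nu}u = -2\kappa\bigl(u_{\nu\nu}+f(0)\bigr).
\]
In Fermi coordinates at $\partial\Omega$, since $u\equiv 0$ on the boundary the tangential Laplace--Beltrami of $u$ vanishes and
\[
\Delta u=u_{\nu\nu}+(n-1)H\,\partial_{\nu}u=u_{\nu\nu}-(n-1)\kappa H \qquad\text{on }\partial\Omega,
\]
so $u_{\nu\nu}+f(0)=(n-1)\kappa H$ and therefore
\[
\partial_{\nu}P\big|_{\partial\Omega}=-2(n-1)\kappa^{2}H.
\]

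Hopf's boundary point lemma now furnishes the desired dichotomy: either $P$ is non-constant, in which case $\partial_{\nu}P(p)>0$ at every $p\in\partial\Omega$, forcing $H(p)<0$ everywhere on $\partial\Omega$; or $P\equiv\kappa^{2}+2F(0)$ in $\Omega$, saturating the Modica-type estimate. In the second alternative, the equality case of the Modica estimate (treated earlier in the paper in the spirit of Caffarelli--Garofalo--Segala) forces $u$ to be one-dimensional, $u(x)=g(\vec a\cdot(x-x_{0}))$ for some unit vector $\vec a$ and profile $g$; the positivity of $u$ in $\Omega$ and its vanishing on $\partial\Omega$ then force $\Omega$ to be the half-space $\{\vec a\cdot(x-x_{0})>0\}$. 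The main technical obstacle I anticipate is verifying that the differential inequality satisfied by $P$ (which degenerates at critical points of $u$) has bounded coefficients in a genuine neighborhood of $\partial\Omega$, so that the Hopf lemma is applicable there; the identification of the equality case with the one-dimensional profile is the other delicate step, but both are covered by the earlier material of the paper.
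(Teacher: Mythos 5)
Your overall route coincides with the paper's: the Modica estimate $P\le\kappa^{2}+2F(0)$ from Theorem \ref{Th11}, the boundary identity $\partial_{\nu}P=-2(n-1)\kappa^{2}H$ obtained from $u_{\nu\nu}=\Delta u-(n-1)H\,\partial_{\nu}u$, the Hopf-lemma dichotomy (either $\partial_{\nu}P>0$ everywhere on $\partial\Omega$, hence $H<0$, or $P$ is constant), and the reduction of the constant case to the rigidity statement of Proposition \ref{esta}. The one step that is not sound as written is the claim that ``a standard unique continuation argument forces $\kappa>0$'' from the mere positivity of $u$. This is false for a general nonlinearity: with $f\equiv-1$ the function $u(x)=x_{n}^{2}/2$ on the half-space $\{x_{n}>0\}$ is a positive solution with $u=\partial_{\nu}u=0$ on the boundary, so vanishing Cauchy data alone does not force $u\equiv0$ when $f(0)\neq0$ (the equation $\Delta u+c(x)u=-f(0)$ is inhomogeneous and unique continuation does not apply). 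Ruling out $\kappa=0$ genuinely requires the hypotheses: if $\kappa=0$, then \eqref{cond1} gives $F(0)\ge0$, while $F\le0$ gives $F(0)=0$; since $F$ is a non-positive $C^{2}$ function attaining its maximum at $0$, one gets $f(0)=F'(0)=0$, and only then does Hopf's lemma (or unique continuation for $\Delta u+c(x)u=0$ with $c$ bounded) produce the contradiction. This is exactly how the paper disposes of the case $\kappa=0$, and your argument needs this insertion; note also that $\kappa>0$ is what guarantees $\nabla u\neq0$ near $\partial\Omega$, so your application of the Hopf lemma to $P$ silently depends on it.

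A smaller point: in the second alternative you justify that $\Omega$ is a half-space rather than a slab by ``the positivity of $u$ in $\Omega$ and its vanishing on $\partial\Omega$,'' but a slab also has these properties. The correct mechanism, which the paper uses, is Step 1 of Proposition \ref{esta}: when $P\equiv\alpha\ge0$ and $F\le0$, the solution $u$ can have no critical points, and a one-dimensional monotone profile forces the half-space. Since you do invoke the earlier equality-case analysis (which already contains this), the conclusion stands, but the reason you give for it is not the operative one.
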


\medskip

 Notice that we are considering bounded solutions of \eqref{eq11}, so $f$ is defined in a closed interval and it is always possible to choose a non positive primitive of $f$, by changing $f$ apropriately outside the image of $u$. In particular, if \eqref{cond1} is satisfied, one cannot solve Problem \eqref{eq11} in a ball, nor in a cylinder, nor in the perturbation of a cylinder. A particular case of Theorem \ref{coro} is Theorem 2.13 in \cite{RS13} for double periodic domains in the plane.

\medskip

We will obtain Theorem \ref{coro} as a corollary of more general results, that represent Modica type estimates for overdetermined problems. Given a primitive $F$ of $f$, we define the $P$-function:
\begin{equation}\label{eq22}
	P(x)=|\nabla u(x)|^{2}+2F(u(x)).
\end{equation}  In \cite{M85} L. Modica proved that if $F$ is a nonpositive function and  $u : \mathbb{R}^{n} \rightarrow \mathbb{R}$ is a bounded $C^{3}$ solution of
$$\Delta u + f(u) = 0,$$
then $P \leq0.$ The proof of this result is based on the fact that $P$ satisfies $L(P)\geq 0$ for a certain elliptic operator $L$, see Lemma \ref{LedefP}, and hence the conclusion follows (not immediately) from the maximum principle. This result has been extended to more general operators in \cite{CGS94}, where Caffarelli, Garofalo and Segala provide the following assertion:
$$P(x_{0}) = 0 \,\ \mbox{for some $x_{0} \in \mathbb{R}^{n}$} \Leftrightarrow P(x) = 0 \,\ \mbox{for all $x \in \mathbb{R}^{n}$} \Leftrightarrow u \,\ \mbox{is 1-dimensional}.$$
Modica type estimates have been studied and extended to many other situations, and it is not possible to give here a complete account of all of them. Let us just mention that such estimates have been extended to domains $\Omega$ which are epigraphs with boundary of nonnegative mean curvature in \cite{FV10}, and to compact manifolds with nonnegative Ricci tensor, see \cite{FV11}.

\medskip

Modica type estimates for overdetermined problems have been studied in \cite{W17} under some conditions on the nonlinearity $f$ and the constant $\kappa$. It is our aim in this paper to study the validity of such estimates in full generality. Our main result is the following:

\begin{theorem} \label{Th11}
Let $\Omega\subset\mathbb{R}^{n},\ n\geq 2,$ be a regular domain, $F\in C^{2}(\mathbb{R})$ be a non-positive function, $F'=f,\, u\in C^{3}(\Omega)$ be a bounded solution to the problem (\ref{eq11}) and $P$ be given by (\ref{eq22}). Then
\[P(x)\leq\max\{0,\kappa^{2}+2F(0)\} \ \mbox{ for all } x \in \Omega ,\]
where $P$ is defined in \eqref{eq22}. Moreover, if there exists a point $x_{0}\in\Omega$ such that
 \[P(x_{0})=\max\{0,\kappa^{2}+2F(0)\},\]
then $P$ is constant, $u$ is $1$-dimensional and $\Omega$ is a half-space.
\end{theorem}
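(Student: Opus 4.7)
The main ingredient is Lemma \ref{LedefP}, which provides a (possibly degenerate) linear elliptic operator $L$ with $L(P)\geq 0$ in $\Omega$, together with the characterization of the equality case. Since both boundary data are constant, the trace of $P$ is constant:
\[
P|_{\partial\Omega}\equiv \kappa^2+2F(0).
\]
When $\Omega$ is bounded, Serrin's theorem makes $\Omega$ a ball and $u$ radial, and the estimate follows from the radial identity $P'(r)=-\tfrac{2(n-1)}{r}(u'(r))^2\leq 0$; the real content is thus the unbounded case.

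For unbounded $\Omega$ I would argue by contradiction. Set $S:=\sup_{\Omega} P$ and assume $S>\max\{0,\kappa^2+2F(0)\}$. Pick $x_k\in\Omega$ with $P(x_k)\to S$; boundedness of $u$ together with Schauder estimates up to the boundary yield uniform $C^{2,\alpha}_{\mathrm{loc}}$ control of the translates $u_k(\cdot):=u(\cdot+x_k)$ on $\Omega_k:=\Omega-x_k$. Up to a subsequence, either $d_k:=\operatorname{dist}(x_k,\partial\Omega)\to\infty$, in which case $\Omega_k$ exhausts $\mathbb{R}^n$ and $u_k\to u_\infty$ in $C^2_{\mathrm{loc}}(\mathbb{R}^n)$ with $u_\infty$ a bounded solution of $\Delta u_\infty+f(u_\infty)=0$; the classical Modica estimate \cite{M85} then gives $P_\infty\leq 0$ on $\mathbb{R}^n$, contradicting $P_\infty(0)=S>0$. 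Or $d_k$ stays bounded, in which case I pick nearest points $y_k\in\partial\Omega$ and, using the $C^{2,\alpha}$ regularity of $\partial\Omega$, pass to a limit regular domain $\Omega_\infty$ with $0\in\partial\Omega_\infty$ and a limit $u_\infty$ solving the same overdetermined problem on $\Omega_\infty$. The point $x_\infty:=\lim(x_k-y_k)$ satisfies $P_\infty(x_\infty)=S>\kappa^2+2F(0)=P_\infty|_{\partial\Omega_\infty}$, so $x_\infty$ lies in the interior and the strong maximum principle applied to $L_\infty(P_\infty)\geq 0$ forces $P_\infty\equiv S$, once again clashing with the boundary value.

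For the equality case, an interior maximum of $P$ at $x_0$, combined with $L(P)\geq 0$ and the strong maximum principle, yields $P\equiv\text{const}$ in $\Omega$; matching with the boundary gives $P\equiv\kappa^2+2F(0)$ and in particular $\kappa^2+2F(0)\geq 0$. The equality case of the Bochner-type pointwise inequality underlying Lemma \ref{LedefP} (in the spirit of Sternberg--Zumbrun) then forces the level sets of $u$ to be flat wherever $\nabla u\neq 0$, hence $u(x)=g(\vec a\cdot x)$ for some unit vector $\vec a$ and a profile $g$. The domain $\Omega$ is therefore either a slab or a half-space; in a slab $g$ would attain an interior maximum at some $t^*$ where $g'(t^*)=0$, and the identity $P\equiv\kappa^2+2F(0)$ together with $F\leq 0$ would force $F(g(t^*))=0$ and, by maximality of $F$ at $g(t^*)$, $f(g(t^*))=0$. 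ODE uniqueness for $g''+f(g)=0$ with data $(g,g')=(g(t^*),0)$ at $t^*$ then gives $g\equiv g(t^*)$, contradicting $g=0$ on $\partial\Omega$. Thus $\Omega$ is a half-space.

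The step I expect to be the main obstacle is the passage to the limit in the boundary case of the contradiction argument: both the application of the strong maximum principle on $\Omega_\infty$ and the identification of the boundary trace $P_\infty|_{\partial\Omega_\infty}=\kappa^2+2F(0)$ rely on \emph{uniform} $C^{2,\alpha}$ control of $\partial\Omega$ under the translations by $y_k$, which must be extracted from the regularity hypothesis on $\Omega$ and on $u$. This is precisely the \emph{careful passage to the limit in the arguments by contradiction} alluded to in the abstract.
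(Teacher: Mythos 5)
Your skeleton (contradiction via a maximizing sequence, translation/compactness, maximum principle, then the Caffarelli--Garofalo--Segala construction for the rigidity part) matches the paper's strategy, and your shortcut of invoking Modica's entire-space estimate when $\operatorname{dist}(x_k,\partial\Omega)\to\infty$ is a legitimate alternative to the paper's argument (which instead applies the Ekeland variational principle to $u_\infty$ to produce near-critical points where $P_\infty\le 2F(\xi)\le 0$). However, the case you yourself flag as the ``main obstacle'' is a genuine gap, not a technicality to be extracted from the hypotheses: when $d_k=\operatorname{dist}(x_k,\partial\Omega)$ stays bounded (and in particular when $d_k\to 0$), there is \emph{no} uniform $C^{2,\alpha}$ control of $\partial\Omega$ along the translations by $y_k$ --- the hypothesis is only that $\Omega$ is a regular domain, and its boundary curvatures may degenerate at infinity. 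Without such uniformity you cannot pass to a limit ``regular domain $\Omega_\infty$'' on which $u_\infty$ solves the same overdetermined problem, you cannot identify the boundary trace $P_\infty|_{\partial\Omega_\infty}=\kappa^2+2F(0)$, and when $d_k\to 0$ you cannot even place $x_\infty=\lim(x_k-y_k)$ in the interior (it sits at distance $0$ from the limit boundary). The paper circumvents this entirely: it rescales by the factor $h_k=d_k\to 0$, so the nonlinearity scales away and the blow-up limit is a nonnegative \emph{harmonic} function; it then shows $u_\infty(x)=a\,x_n^+$ with $a=\lim|\nabla u(x_k)|>\kappa$ and derives the contradiction $a\le\kappa$ by a Jerison--Kamburov barrier comparison at the free boundary $\partial\{u_k>0\}$, never using any limit regularity of $\partial\Omega$. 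This free-boundary blow-up is the missing idea in your proposal. In the intermediate subcase $d_k\to d\in(0,\infty)$ your contradiction also leans on the unavailable boundary trace, whereas the paper's Ekeland argument needs only interior information.

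Two smaller points. In the equality case you apply the strong maximum principle to $L(P)\ge 0$ directly at $x_0$, but the drift of $L$ is $2f(u)\nabla u/|\nabla u|^2$, singular at critical points of $u$; the paper's Step~1 first rules out $\nabla u(x_0)=0$ (via $\alpha=0=F(u(x_0))$ and a Gronwall-type ODE argument showing $u$ would be constant), and this step cannot be skipped. Your exclusion of the slab by ODE uniqueness at an interior critical point of the profile is essentially the paper's argument and is fine.
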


The second part of Theorem \ref{Th11} is related to \cite{FV10} (see also \cite{FO21} for generalization to Riemannian manifolds). In the case where $P$ is bounded above by $\kappa^{2}+2F(0)$ we can give information on the mean curvature of $\partial \Omega$, as in the following statement:
\begin{theorem} \label{Th12}
 Let $\Omega\subset\mathbb{R}^{n},n\geq 2,$ be a regular domain that supports a bounded solution $u\in C^{3}(\Omega)$ to the problem (\ref{eq11}) with $\kappa \neq 0$. Assume that
\begin{equation} \label{cond} P(x) \leq \kappa^{2}+2F(0) \ \mbox{ for all } x \in \Omega.\end{equation}
Then, $H(p) \leq 0$ for any $p \in \partial\Omega$. Moreover, if there exists $p \in \partial \Omega$ such that $H(p)=0$, then $P$ is constant, $u$ is $1$-dimensional and $\Omega$ is either a half-space or a slab, that is, the domain between two parallel hyperplanes.
\end{theorem}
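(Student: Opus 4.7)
The key observation is that the overdetermined boundary conditions fix the value of $P$ on $\partial\Omega$: since $u=0$ and $\nabla u = -\kappa\nu$ there, one has $P\equiv \kappa^2 + 2F(0)$ on $\partial\Omega$. Combined with hypothesis \eqref{cond}, every $p\in\partial\Omega$ is a global maximum of $P$ on $\overline{\Omega}$. The plan is to compute $\partial_\nu P$ on $\partial\Omega$ in terms of the mean curvature $H$ and exploit the sign forced by the boundary maximum.

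A direct differentiation yields
\[
\partial_\nu P = 2\,\nu^T(D^2 u)\nabla u + 2f(u)\,\partial_\nu u = -2\kappa\bigl(u_{\nu\nu} + f(0)\bigr) \quad\text{on }\partial\Omega,
\]
after using $u=0$ and $\nabla u = -\kappa\nu$. To eliminate $u_{\nu\nu}$ I would invoke the standard boundary decomposition $\Delta u = \Delta_{\partial\Omega} u + u_{\nu\nu} + H\,\partial_\nu u$, in which $\Delta_{\partial\Omega} u = 0$ because $u\equiv 0$ on $\partial\Omega$; together with $\Delta u = -f(0)$ and $\partial_\nu u = -\kappa$ this forces $u_{\nu\nu} = -f(0) + \kappa H$, and substitution gives the clean identity $\partial_\nu P = -2\kappa^2 H$ on $\partial\Omega$. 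Since $P$ attains its maximum at every $p\in\partial\Omega$, the one-sided derivative must satisfy $\partial_\nu P(p)\geq 0$, and $\kappa\neq 0$ forces $H(p)\leq 0$.

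For the equality case, if $H(p_0)=0$ at some $p_0\in\partial\Omega$, then $\partial_\nu P(p_0)=0$. Because $|\nabla u|=|\kappa|>0$ on $\partial\Omega$, the elliptic operator $L$ of Lemma~\ref{LedefP} (for which $L(P)\geq 0$) is uniformly elliptic in a neighborhood of $p_0$ in $\overline{\Omega}$, so the classical Hopf boundary lemma applies: the vanishing normal derivative rules out the strict inequality $P < \kappa^2 + 2F(0)$ in the interior near $p_0$, and the strong maximum principle for $L(P)\geq 0$ then propagates the equality $P\equiv \kappa^2 + 2F(0)$ to all of $\Omega$. With $P$ globally constant at its maximum, the one-dimensional rigidity argument from the proof of Theorem~\ref{Th11} shows that $u(x)=g(\vec a\cdot (x-x_0))$ for some unit vector $\vec a$. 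A phase-plane analysis of $g''+f(g)=0$ with $g=0$ and $|g'|=|\kappa|$ at the endpoints then yields the two possibilities: either $g$ is defined on a half-line (so $\Omega$ is a half-space), or on a bounded interval with $g$ vanishing and $|g'|=|\kappa|$ at both endpoints (so $\Omega$ is a slab between two parallel hyperplanes).

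The main obstacle is the application of Hopf's lemma and the strong maximum principle to $P$, which satisfies only a degenerate elliptic inequality in general because the principal part of $L$ vanishes at the critical points of $u$. Near $\partial\Omega$ this is handled by $\kappa\neq 0$, which ensures uniform ellipticity of $L$ in a tubular neighborhood of $\partial\Omega$. Propagating constancy of $P$ across the possibly nonempty critical set of $u$ in the interior requires the standard technique used in the proof of Theorem~\ref{Th11}, combining continuity of $P$ with local applications of the strong maximum principle on the regular set of $u$.
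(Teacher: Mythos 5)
Your computation of $H\leq 0$ is correct and essentially identical to the paper's: both of you observe that \eqref{cond} makes every boundary point a maximum of $P$, compute $\partial_\nu P$ via the decomposition of $\Delta u$ on $\partial\Omega$ (your $H$ is the unnormalized mean curvature, the paper's is the average, so your $-2\kappa^2H$ versus their $-2(n-1)\kappa^2H$ is only a convention difference), and conclude from $\partial_\nu P\geq 0$ and $\kappa\neq 0$. The application of Hopf's lemma in the equality case, justified by the uniform ellipticity of $L$ near $\partial\Omega$ where $|\nabla u|=\kappa>0$, is also the same as in the paper.

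The gap is in your propagation of $P\equiv\kappa^2+2F(0)$ from a neighborhood of $p_0$ to all of $\Omega$. You defer this to ``the standard technique used in the proof of Theorem~\ref{Th11}'', but that technique does not transfer: there, the openness of $\{P=\max P\}$ rests on knowing that $\nabla u\neq 0$ at every point where $P$ attains its maximum, and this is deduced (Step 1 of Proposition~\ref{esta}, and the analogous point in the first proposition) from the hypothesis $F\leq 0$, via $0\leq\alpha=2F(u(x_0))\leq 0$. Theorem~\ref{Th12} explicitly drops the hypothesis $F\leq 0$, and the critical set of $u$ can genuinely obstruct the argument: for the slab solution $u=\sin(x_n)$ on $\{0<x_n<\pi\}$ with $f(u)=u$, $F(u)=u^2/2$, $\kappa=1$, one has $P\equiv\kappa^2+2F(0)$, yet $\nabla u$ vanishes on the whole hyperplane $\{x_n=\pi/2\}$, which disconnects the regular set; local maximum principles on the regular set plus continuity only give constancy of $P$ on the closure of one component. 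The paper avoids this entirely: from $P\equiv\text{const}$ and $\nabla u\neq 0$ in a neighborhood of $p_0$ it runs the substitution $v=G(u)$ of \eqref{otra}--\eqref{eq42} \emph{locally} to get that $u$ is $1$-dimensional there, and then invokes unique continuation for $u$ (the derivatives of $u$ in directions orthogonal to $\vec a$ solve a linear elliptic equation and vanish on an open set) to globalize. Your argument needs this reroute, or some other device, to cross the critical set; as written, the step ``$P$ constant near $p_0$ implies $P$ constant on $\Omega$'' is unjustified. The concluding phase-plane discussion of $g''+f(g)=0$ distinguishing half-space from slab is fine once the globalization is repaired.
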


Let us point out that Theorem \ref{Th12} does not require that $F$ is nonpositive, since the condition \eqref{cond} is invariant under addition of constants to $F$. On the contrary, it requires that the normal derivative on $\partial \Omega$ does not vanish in order to exclude the presence of critical points on $\partial \Omega$. It is well known that there exist solutions of Problem \eqref{eq11} in balls, cylinders, or generalized onduloids (see for for instance \cite{S10}). Theorem \ref{Th12} implies that in all such cases,
  $$\sup\limits_{x\in \Omega} P(x)>\kappa^{2}+2F(0)\,.$$


\medskip

The proof of the results is organized as follows. As a first ingredient for Theorem \ref{Th11}, we prove a uniform gradient bound on $u$. This is based on a rescaling argument and a Harnack inequality, together with regularity estimates. Once this is obtained, we make use of the fact that the function $P$ is a subsolution for a certain elliptic operator outside the critical points of $u$. For the study of the supremum of $P$ we need to study several cases, depending on the behavior of the maximizing sequences. The proof of Theorem \ref{Th11} concludes again by a scaling argument and passing to a limit, in the spirit of \cite{JK15}. Theorems \ref{Th12} and \ref{coro} will be proved afterwards.

\section{Preliminaries}
\label{Section 2}

Let $u$ be a $C^3$ solution of \eqref{eq11}, $f \in C^1$, and $P$ as in \eqref{eq22} where $F$ is a primitive of $f$ (no assumption on its sign at the moment). We begin this section by showing that the function $P$ is a subsolution of an elliptic PDE. This is a very well known result (see for instance \cite{S81}), which we present here for the sake of completeness.

\begin{lemma} \label{LedefP}The function $P$ satisfies:
	\begin{equation}\label{eqP}
		L(P):= \Delta P+2f(u)\frac{\nabla u}{|\nabla u|^{2}}\cdot\nabla P \geq 0
	\end{equation}	
	for any $x \in \Omega$ such that $\nabla u(x) \neq 0$.
\end{lemma}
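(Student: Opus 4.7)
The plan is a direct pointwise calculation at any $x$ where $\nabla u(x)\neq 0$, followed by a short sum-of-squares identity. I would begin by writing $P_j = 2u_k u_{kj} + 2f(u)u_j$ and computing $\Delta P$ by differentiating again. The crucial simplification is that the PDE $\Delta u=-f(u)$, when differentiated, yields $(\Delta u)_k = -f'(u)u_k$, which makes the two $f'(u)|\nabla u|^2$ terms cancel and reduces $\Delta P$ to $2|D^2u|^2 - 2f(u)^2$.

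Next I would compute the drift term $2f(u)\,\nabla u\cdot\nabla P/|\nabla u|^2$, which expands into $\frac{4f(u)}{|\nabla u|^2}u_ju_ku_{jk} + 4f(u)^2$. Adding this to $\Delta P$ gives
\[
L(P) = 2|D^2u|^2 + 2f(u)^2 + \frac{4f(u)}{|\nabla u|^2}\,u_ju_ku_{jk}.
\]

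The remaining task is to show that this expression is nonnegative. I would rotate coordinates at the fixed point so that $\nabla u$ is parallel to $e_n$, which turns the last quotient into $u_{nn}$; substituting $f(u)=-\Delta u$ yields
\[
L(P) = 2|D^2u|^2 + 2(\Delta u)^2 - 4(\Delta u)\,u_{nn}.
\]
A short algebraic rearrangement, based on $\Delta u - u_{nn} = \sum_{i<n}u_{ii}$ and $|D^2u|^2 = u_{nn}^2 + \sum_{(i,j)\neq(n,n)}u_{ij}^2$, then exhibits $L(P)$ as a genuine sum of squares
\[
L(P) = 2\!\!\!\sum_{(i,j)\neq(n,n)}\! u_{ij}^2 + 2\Bigl(\sum_{i<n} u_{ii}\Bigr)^{\!2},
\]
which is manifestly $\geq 0$.

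The argument is entirely standard and I do not foresee any genuine obstacle. The one step that might look mysterious in the abstract is recognizing that the drift contribution $2f(u)\nabla u\cdot\nabla P/|\nabla u|^2$ is precisely what is needed to absorb the cross term $-4(\Delta u)u_{nn}$ and turn the remainder into a sum of squares. This becomes transparent as soon as one aligns coordinates with the gradient and uses $f(u)=-\Delta u$ to place everything on the same footing.
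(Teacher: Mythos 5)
Your computation is correct: the identities $\Delta P=2|D^2u|^2-2f(u)^2$ and $2f(u)\nabla u\cdot\nabla P/|\nabla u|^2=4f(u)u_ju_ku_{jk}/|\nabla u|^2+4f(u)^2$ are exactly right, and the final rotation-plus-completing-the-square step is a valid pointwise argument (all quantities involved are rotation invariant, so working in coordinates with $\nabla u\parallel e_n$ is legitimate). The identity
\[
L(P)=2\!\!\sum_{(i,j)\neq(n,n)}\! u_{ij}^2+2\Bigl(\sum_{i<n}u_{ii}\Bigr)^{2}
\]
checks out, since $2u_{nn}^2+2(\Delta u)^2-4(\Delta u)u_{nn}=2(\Delta u-u_{nn})^2$.

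The paper starts from the same computation of $\Delta P$ but finishes differently: from $D_iP-2f(u)D_iu=2\sum_j D_juD_{ij}u$ it applies Cauchy--Schwarz to get $|\nabla P-2f(u)\nabla u|^2\leq 4|\nabla u|^2|D^2u|^2$, which yields the quantitatively stronger inequality
\[
L(P)\geq \frac{|\nabla P|^2}{2|\nabla u|^2}\geq 0.
\]
Your route trades this extra information for a cleaner sum-of-squares identity that proves exactly the stated inequality $L(P)\geq 0$; since the rest of the paper only ever uses $P$ as a subsolution of $L$ (via the strong maximum principle and Hopf's lemma), your weaker conclusion is sufficient for all subsequent arguments. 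The stronger form with the $|\nabla P|^2$ term is the one classically exploited in Modica's and Caffarelli--Garofalo--Segala's original proofs, so it is worth being aware of it, but for this lemma as stated your proof is complete.
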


\begin{proof}
	We have that $P\in C^{2}(\Omega)$ and following \cite{S81} we find that
	\[D_{i}P=2\sum\limits_{j=1}^{n}D_{j}uD_{ij}u+2f(u)\cdot D_{i}u\]
	for every $i=1,\cdots,n$. Therefore,
	\[D_{i}P-2f(u)\cdot D_{i}u=2\sum\limits_{j=1}^{n}D_{j}uD_{ij}u\leq2|\nabla u|\left[\sum\limits_{j=1}^{n}(D_{ij}u)^{2}\right]^{1/2}.\]
	Using $-\Delta u=f(u),$ the Laplacian of $P$ can be given by
	\begin{align*}
		\Delta P&=2\sum\limits_{i,j=1}^{n}(D_{ij}u)^{2}+2\sum\limits_{j=1}^{n}D_{j}uD_{j}(\Delta u)+2f'(u)|\nabla u|^{2}-2f^{2}(u) \\
		&=2\sum\limits_{i,j=1}^{n}(D_{ij}u)^{2}-2f^{2}(u),
	\end{align*}
	so that
	\[|\nabla u|^{2}\Delta P\geq \frac{1}{2}|\nabla P|^{2}-2f(u)\nabla u\cdot\nabla P,\]
	and then
	\begin{equation*}
		L(P):= \Delta P+2f(u)\frac{\nabla u}{|\nabla u|^{2}}\cdot\nabla P\geq \frac{|\nabla P|^{2}}{2|\nabla u|^{2}} \geq 0.
	\end{equation*}
This concludes the proof of the lemma.
\end{proof}

We finish this section with a uniform estimate of the gradient of $u$. This will be essential in the proof of Theorem \ref{Th11}.
\begin{lemma} \label{Le21} If $u$ is a bounded solution of the problem (\ref{eq11}), then there exists a constant $M>0$ such that $|\nabla u|\leq M$ in $\Omega$. Moreover, $M$ depends only on $n$, $\|u\|_{L^{\infty}}$,  $\| f(u) \|_{L^{\infty}}$ and $\kappa$.
\end{lemma}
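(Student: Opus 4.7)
The plan is a classical blow-up by contradiction, in the spirit of Gidas--Spruck. Suppose for contradiction that there exists a sequence $\{x_n\}\subset\overline{\Omega}$ with $L_n:=|\nabla u(x_n)|\to\infty$. Setting $r_n:=1/L_n$ and $v_n(y):=u(x_n+r_n y)$ on the rescaled domain $\Omega_n:=(\Omega-x_n)/r_n$, one checks directly that
\[
|v_n|\leq\|u\|_{L^\infty},\qquad |\nabla v_n(0)|=1,\qquad \Delta v_n=-r_n^2 f(v_n),
\]
so $\|\Delta v_n\|_{L^\infty}\leq r_n^2\|f(u)\|_{L^\infty}\to 0$. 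When the basepoint sits close to $\partial\Omega$, the rescaled boundary conditions read $v_n=0$ and $\partial_{\nu_n} v_n=-r_n\kappa\to 0$ on $\partial\Omega_n$.

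Next I would introduce $d_n:=d(x_n,\partial\Omega)/r_n\in[0,+\infty]$ and extract a subsequence with $d_n\to d_\infty\in[0,+\infty]$. If $d_\infty=+\infty$ (interior regime), interior $C^{1,\alpha}$ elliptic estimates, applied on balls of fixed radius eventually contained in $\Omega_n$, yield $v_n\to v$ in $C^1_{\mathrm{loc}}(\mathbb{R}^n)$, where $v$ is bounded and harmonic; by Liouville's theorem $v$ is constant, contradicting $|\nabla v(0)|=1$. If instead $d_\infty\in[0,+\infty)$ (boundary regime), I would let $p_n\in\partial\Omega$ realize the distance from $x_n$ and rotate so that the outward normals $\nu(p_n)$ converge to $e_1$; then $\Omega_n\cap B_R$ converges to the half-space $H:=\{y_1<d_\infty\}$ with $\partial\Omega_n$ locally flattening to $\{y_1=d_\infty\}$. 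Boundary Schauder estimates, using the homogeneous Dirichlet data and the vanishing of $\Delta v_n$ in $L^\infty$, provide uniform $C^{1,\alpha}$ bounds up to $\partial\Omega_n$; extracting a further subsequence, $v_n\to v$ in $C^1_{\mathrm{loc}}(\overline{H})$ with $v$ bounded harmonic in $H$, $v=0$ on $\partial H$, and $|\nabla v(0)|=1$. Odd reflection across $\partial H$ then produces a bounded harmonic function on $\mathbb{R}^n$, constant by Liouville, hence identically zero since $v=0$ on $\partial H$. This again contradicts $|\nabla v(0)|=1$, closing the argument.

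The dependence of $M$ on $n$, $\|u\|_{L^\infty}$, $\|f(u)\|_{L^\infty}$ and $\kappa$ follows by reading off the quantitative constants in the interior/boundary Schauder steps — the right-hand side is controlled by $\|f(u)\|_{L^\infty}$, and $\kappa$ enters through the identity $|\nabla u|=\kappa$ on $\partial\Omega$, which forces $M\geq\kappa$. I expect the boundary regime to be the main obstacle: one needs the rescaled boundaries $\partial\Omega_n$ to flatten with uniformly controlled geometry near $y_n:=(p_n-x_n)/r_n$, so that the boundary $C^{1,\alpha}$ estimates apply with constants independent of $n$. Since $r_n\to 0$, the principal curvatures of $\partial\Omega_n$ near $y_n$ scale like $r_n$, so the required flatness is automatic for $n$ large provided $\Omega$ is regular enough locally (which is implicit in $u\in C^3$ up to the boundary). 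The other delicate point is the Liouville/odd-reflection step, where one must be careful that the limit $v$ is genuinely harmonic across $\partial H$ after reflection; this follows from the $C^1_{\mathrm{loc}}$ convergence and standard removable-singularity-type reasoning for harmonic functions.
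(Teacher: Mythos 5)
Your blow-up strategy is genuinely different from the paper's argument, and the interior regime ($d_\infty=+\infty$) is fine: interior estimates plus Liouville do the job there. The problem is the boundary regime, and it is exactly the point you flag and then dismiss. To run boundary Schauder (or even boundary $C^{1,\alpha}$) estimates for $v_n$ on $\Omega_n\cap B_R(y_n)$ with constants independent of $n$, you need the pieces $\partial\Omega_n\cap B_R(y_n)$ to be \emph{uniformly} regular. These are the pieces of $\partial\Omega$ of radius $Rr_n$ around $p_n$, rescaled by $1/r_n$; their second fundamental forms are of size $r_n\sup_{B_{Rr_n}(p_n)}|A_{\partial\Omega}|$, which tends to zero only if the curvature of $\partial\Omega$ is uniformly bounded (or grows slower than $1/r_n$) along the sequence $p_n$. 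The lemma makes no such assumption: $\Omega$ is an arbitrary unbounded smooth domain, the $p_n$ may escape to infinity, and the curvature of $\partial\Omega$ may be unbounded. Your fallback --- that the needed uniform regularity is ``implicit in $u\in C^3$ up to the boundary'' --- is circular: a uniform bound on derivatives of $u$ is precisely what the lemma is trying to establish (and indeed one only controls $(n-1)H\kappa=f(0)-\partial_\nu^2 u$ \emph{after} one has uniform second-order bounds, and even then only the mean curvature). For the same reason, the constant produced by your compactness argument would in fact depend on the geometry of $\partial\Omega$, contradicting the stated dependence of $M$ on $n$, $\|u\|_{L^\infty}$, $\|f(u)\|_{L^\infty}$ and $\kappa$ alone.

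The paper avoids this entirely: it rescales by $h=\mathrm{dist}(x_0,\partial\Omega)$ so that the rescaled unit ball is an \emph{interior} tangent ball, splits $\tilde u=v+w$ with $v$ harmonic and $w$ solving the Poisson problem with zero data on $\partial B_1$, and then uses the explicit Harnack inequality for the nonnegative harmonic function $v$ together with the single piece of boundary information $|\nabla\tilde u(z_0)|=\kappa$ at the tangency point to bound $v(0)$, hence $\tilde u$ on $B_{1/2}$, hence $|\nabla u(x_0)|$ by interior gradient estimates. No regularity of $\partial\Omega$ beyond the existence of the tangency point is ever invoked, which is what makes the stated dependence of $M$ legitimate. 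To salvage your argument you would either have to add a uniform regularity hypothesis on $\partial\Omega$ (changing the statement), or replace the boundary Schauder step by an interior-tangent-ball/Harnack device of the same kind the paper uses --- at which point the contradiction scaffolding becomes unnecessary.
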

\begin{proof}
Let us observe that the above gradient estimate is an immediate consequence of interior regularity estimates for elliptic equations in
\[
\{x\in\Omega:\mbox{dist}(x,\partial\Omega)>\delta\}\,,
\]
for any $\delta >0$ fixed. We now prove that this estimates holds also up to the boundary.

\medskip

Given any $x_{0} \in \Omega$, denote by $h:=\mbox{dist}(x_{0},\partial\Omega)$ and assume that this distance is attained at $y_{0}\in\partial\Omega$. By the previous comment, we can focus on the case $h \leq 1$. We define:
\[\tilde{u}(x):=\frac{1}{h}u(x_{0}+hx),\]
then in $B_{1}(0),$
\[-\Delta\tilde{u}=hf(h\tilde{u}),\,\ \tilde{u}>0.\]
Let $z_{0}:=\frac{y_{0}-x_{0}}{h},$
\[|\nabla\tilde{u}(z_{0})|=|\nabla u(y_{0})|=\kappa,\]
since the ball $B_{h}(x_{0})$ is tangent to $\partial\Omega$ at $y_{0}.$
We now decompose $\tilde{u}=v+w,$ where the functions $v$, $w$ solve the following two problems, respectively:
\begin{equation*}
  \begin{cases}
  \Delta v=0 &\mbox{in $ B_{1}(0)$, }\\
   v=\tilde{u} &\mbox{on $\partial B_{1}(0)$, }
  \end{cases}
\quad \mbox{and} \quad
  \begin{cases}
  -\Delta w=hf(h\tilde{u}) &\mbox{in $ B_{1}(0)$, }\\
 w=0 &\mbox{on $\partial B_{1}(0)$. }
  \end{cases}
\end{equation*}
For the function $w,$ we can use regularity estimates (see for instance Theorem 8.33 of \cite{gt}) to conclude that
\[\|w\|_{C^{1,\alpha}(\bar{B}_{1})} \leq C h\|f(u)\|_{L^{\infty}}.\]
Here $C$ is a constant depending only on $n$.
We now turn our attention to $v$. By applying an explicit version of the Harnack inequality for harmonic functions (see, for instance, \cite[Chapter 2, exercise 2.6]{gt}), one can get that
\begin{equation} \label{harnack} \frac{1-r}{(1+r)^{n-1}}v(0)\leq v(x)\leq\frac{1+r}{(1-r)^{n-1}}v(0)\end{equation}
where $r=|x|<1.$ Using the first inequality and computing boundary derivatives on $z_0$, we have:
$$ 2^{1-n} v(0) \leq \big | \partial_{\nu} v (z_0) \big | \leq |\nabla \tilde{u}(z_0)| + |\partial_\nu w(z_0)| \leq \kappa +C\, h \| f(u) \|_{L^{\infty}}.$$
If $h$ is sufficiently small, we have that
\[v(0)\leq M\,,\]
where $M= 2^{n-1} \Big (\kappa + C \| f(u) \|_{L^{\infty}} \Big )$.
We now use the second inequality in \eqref{harnack} to conclude that \[v(x)\leq M\frac{1+r}{(1-r)^{n-1}}.\]
Based on these results, we get that
 \[\tilde{u}(x)=v(x)+w(x)\leq M\frac{1+r}{(1-r)^{n-1}}+C h \| f(u) \|_{L^{\infty}}.\]
Therefore,
\[\sup\limits_{B_{1/2}(0)}\tilde{u}(x)\leq M,\]
taking a bigger $M$ if necessary. By standard interior gradient estimates,
\[|\nabla u(x_{0})|=|\nabla\tilde{u}(0)|\leq C M,\]
where, again, this constant $C$ depends only on $n$ and $\|f(u) \|_{L^{\infty}}$.
\end{proof}


\section{Proof of Theorems}
\label{Section 3}
In this section, we shall present the proof of our main result, Theorem \ref{Th11}, from which we will deduce Theorem \ref{Th12} and then Theorem \ref{coro}. Define
\begin{equation}\label{eq31}
  \alpha :=\max\{0,\kappa^{2}+2F(0)\}.
\end{equation}

\begin{proposition} Under the assumptions of Theorem \ref{Th11}, we have that $P(x) \leq \alpha$ for all $x \in \Omega$. \end{proposition}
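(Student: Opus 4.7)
The plan is a proof by contradiction. Set $S := \sup_\Omega P$; by Lemma \ref{Le21} and the hypothesis $F \leq 0$ one has $P \leq |\nabla u|^2 \leq M^2$, so $S$ is finite. I would assume $S > \alpha \geq 0$ and pick a maximizing sequence $x_k \in \Omega$ with $P(x_k) \to S$, then split the analysis according to whether $(x_k)$ admits a subsequence converging in $\overline{\Omega}$ and, if not, according to the behaviour of $d_k := \operatorname{dist}(x_k, \partial \Omega)$.

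If, up to a subsequence, $x_k \to x_\infty \in \overline \Omega$, the boundary case $x_\infty \in \partial \Omega$ is immediate: continuity of $P$ on $\overline \Omega$ gives $S = \kappa^2 + 2F(0) \leq \alpha$, contradicting $S > \alpha$. In the interior case $x_\infty \in \Omega$, I would study the level set $A := \{x \in \Omega : P(x) = S\}$, which is closed in $\Omega$ by continuity. I would then show it is also open: at every $y \in A$, the identity $|\nabla u(y)|^2 = S - 2F(u(y)) \geq S > 0$ forces $\nabla u \neq 0$ in a neighborhood of $y$, so Lemma \ref{LedefP} makes $L$ a locally uniformly elliptic operator there with $L(P) \geq 0$, and the strong maximum principle yields $P \equiv S$ near $y$. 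Since $\Omega$ is connected, $A = \Omega$, and passing to the boundary gives $S = \kappa^2 + 2F(0) \leq \alpha$, again a contradiction.

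The substantive case is $|x_k| \to \infty$. I would then translate $v_k(x) := u(x + x_k)$ and split into subcases. If $d_k \to \infty$, interior Schauder estimates applied to $\Delta v_k + f(v_k) = 0$ on balls that eventually lie inside $\Omega - x_k$ give uniform $C^{3,\alpha}_{\mathrm{loc}}$ bounds, so along a subsequence $v_k \to v_\infty$ in $C^2_{\mathrm{loc}}(\mathbb{R}^n)$, where $v_\infty$ is a bounded entire $C^3$ solution of $\Delta v + f(v) = 0$ with $P_{v_\infty}(0) = S > 0$, contradicting the classical Modica estimate of \cite{M85}. If instead $d_k$ stays bounded, I would pick nearest boundary points $y_k \in \partial\Omega$, translate and rotate by $y_k$ so that the pointed domains $\Omega - y_k$ converge locally to the half-space $H = \{x_n > 0\}$, and use uniform boundary Schauder estimates to obtain a limit $v_\infty \in C^3(\overline H)$ solving the overdetermined problem on $H$ with the same $\kappa$; the transformed points $x_k - y_k$ then converge to some $d_\infty e_n$. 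For $d_\infty > 0$, the argument of the interior case applied on $H$ yields $P_{v_\infty} \equiv S$, contradicting the boundary value $P_{v_\infty}|_{\partial H} = \kappa^2 + 2F(0) \leq \alpha < S$. For $d_\infty = 0$, continuity of $P_{v_\infty}$ at $0 \in \partial H$ gives the same contradiction directly.

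The step I expect to be the main obstacle is the compactness argument in the subcase where $d_k$ stays bounded: one needs uniform regularity of $\partial \Omega$ so that the translated domains admit a pointed limit equal to a half-space, together with uniform boundary Schauder estimates to ensure the limit inherits both the equation and the overdetermined conditions $v = 0$, $\partial_\nu v = -\kappa$ on $\partial H$. Once that limit is in hand, the remaining ingredients---Lemma \ref{LedefP}, the strong maximum principle for $L$ on the region where $\nabla u \neq 0$, and the classical Modica estimate on $\mathbb{R}^n$---are standard.
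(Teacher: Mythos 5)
Your overall skeleton (contradiction, maximizing sequence, case analysis on $\operatorname{dist}(x_k,\partial\Omega)$, blow-up limits) matches the paper, and two of your cases are sound: the convergent-subsequence case via the open-closed argument for $\{P=S\}$ is exactly the paper's bounded case, and your treatment of $d_k\to\infty$ by passing to a bounded entire solution and invoking Modica's theorem is a legitimate (and arguably cleaner) shortcut that the paper does not use.

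The genuine gap is in the regime where $d_k=\operatorname{dist}(x_k,\partial\Omega)$ stays bounded. You assume that the pointed domains $\Omega-y_k$ converge locally to a half-space and that uniform boundary Schauder estimates let you pass the equation, the two boundary conditions, and the value of $P$ to a limit on $\{x_n>0\}$. Nothing in the hypotheses supports this: $\Omega$ is merely a regular (possibly wildly non-uniformly curved) unbounded domain, with no uniform interior/exterior ball condition and no uniform $C^{1,\alpha}$ bound on $\partial\Omega$ along the sequence $y_k\to\infty$, so neither the domain convergence nor the uniform-up-to-the-boundary estimates are available. This is not a technical detail one can wave through; it is the heart of the matter, and the paper is structured precisely to avoid it. For subsequences with $d_k\geq\delta>0$ (which your decomposition lumps into the problematic bounded case), the paper only translates by $x_k$, works inside the fixed interior ball $B_\delta(0)$, and rules out $P_\infty\equiv\beta>0$ on the connected component of $\{u_\infty>0\}$ containing $0$ by producing, via the Ekeland variational principle, near-maximum points of $u_\infty$ where $\nabla u_\infty\to 0$ and hence $P_\infty\to 2F(\xi)\leq 0$ --- no boundary regularity is needed. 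For $d_k\to 0$ it rescales by $1/h_k$, obtains a nonnegative \emph{harmonic} limit (the nonlinearity scales away), shows the limit is $a\,x_n^+$ with $a>\kappa$ by sub-harmonicity of $|\nabla u_\infty|^2$, and then derives $a\leq\kappa$ by a Jerison--Kamburov comparison with an explicit supersolution on a perturbed half-ball touching the free boundary $\partial\{u_k>0\}$ --- again using only the sign and normal-derivative conditions at a single touching point, not convergence of $\partial\Omega$. You would need to replace your compactness step with arguments of this kind for the proof to close.
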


\begin{proof}
By Lemma \ref{Le21} we know that $P$ is bounded. Reasoning by contradiction, assume that
\[\beta:=\sup\limits_{\Omega} P(x)>\alpha.\]
There exists a sequence $(x_{k})_{k\in\mathbb{N}}\subset\Omega$ such that
\[P(x_{k})\rightarrow\beta\,\ \mbox{as}\,\ k\rightarrow \infty.\]
 If $x_{k}$ is bounded, up to a subsequence we can assume that $x_k \to x_{0}$, with $P(x_0)=\beta$. Since $\beta>0$, we conclude that $x_0$ is not a critical point of $u$. Moreover, since $\beta > \kappa^2 + 2 F(0)$, then $x_0 \in \Omega$. As a consequence, $P$ attains a local maximum at $x_0$, and $\nabla u(x_0) \neq 0$; the maximum principle applied to $P$ implies  that $P$ is constant on a neighborhood of $x_0$. This argument implies that the set:
 $$ \{x \in \Omega: \ P(x)= \beta\}$$
 is a non-empty open subset of $\Omega$. Since it is obviously closed, then $P(x)= \beta$ for all $x \in \Omega$. But $P(x)= \kappa^2 + 2F(0) < \beta$ if $x \in \partial \Omega$, and this is a contradiction with the continuity of $P$.

We now assume that $x_{k}$ is unbounded, and discuss the following two cases.

 \medskip

\textbf{Case 1:} $\limsup_{k \to +\infty} \mbox{dist}(x_{k},\partial\Omega)>\delta.$ \\
Let us consider $u_k$ extended by $0$ outside $\Omega$, so that $u_k: \R^n \to \R$ is a globally Lipschitz function. We consider $u_{k}(x)=u(x+x_{k})$ and $P_{k}(x)=P(x+x_{k}),$ so that
\begin{equation}\label{eq12}
P_{k}(0)=P(x_{k})\to\beta.
\end{equation}
By a Cantor diagonal argument we can take a subsequence, still denoted by $u_k$, so that on compact sets of $\R^n$ the sequence $u_k$ converges uniformly to a certain Lipschitz function $u_\infty \geq 0$. Moreover $\nabla u_k \stackrel{*}{\rightharpoonup} \nabla u_{\infty}$, where $\stackrel{*}{\rightharpoonup}$ denotes convergence with the weak star topology in $L^{\infty}$.

Denoting $ P_{\infty}= |\nabla u_{\infty}|^2 + 2 F(u_\infty) \in L^{\infty}(\R^n)$, we have that $P_\infty \leq \beta$. Recall that by Lemma \ref{Le21}, $f(u_k)$ is a globally Lipschitz function in $\R^n$. By interior regularity estimates we have that $u_k$ is bounded in $C^{2, \alpha}$ norm on any compact subset of $B_{\delta}(0)$. Then $u_k$ converges to $u_{\infty}$ in $C^{2, \alpha}$ sense in compact subsets of $B_{\delta}(0)$. In particular,
\begin{equation} \label{jo} 0< \beta \leftarrow P_k(0) \to P_{\infty}(0). \end{equation}

Since $\beta>0$, then $0$ is not a critical point of $u_{\infty}$. This, in particular, excludes the possibility $u_{\infty}(0)=0$. Define the open set $\Omega_{\infty}= \{x \in \R^n: \ u_\infty (x) >0\}$. Clearly $\Omega_{\infty}$ is not empty since $0 \in \Omega_{\infty}$. We first claim that inside $\Omega_\infty$ the convergence is $C^{2, \alpha}_{loc}$, and $-\Delta u_{\infty} = f(u_\infty)$. Indeed, given $p \in \Omega_{\infty}$, by uniform convergence there exists $r>0$ such that $u_k(x)>0$ for any $x \in B_r(p)$. Reasoning as above, interior regularity estimates allow us to conclude the claim.

By applying the maximum principle to $P_{\infty}$, we conclude that $P_{\infty}$ is constant on a neighborhood of $0$. Indeed, we have that the set $\{ x \in \Omega:\ P_{\infty}(x)= \beta \}$ is an open subset of $\Omega_{\infty}$. By continuity of $P_{\infty}$ in $\Omega_{\infty}$, it is also closed. As a consequence,
\begin{equation} \label{jo2} P_{\infty}(x) = \beta \mbox{ for all } x \in \tilde{\Omega}_{\infty}, \end{equation} where $\tilde{\Omega}_{\infty}$ is the connected component of $\Omega_{\infty}$ containing $0$.

Take now $y_k \in \tilde{\Omega}_{\infty}$ such that $u_{\infty}(y_k)  \to \xi$, where
$$ \xi= \sup \{u_{\infty}(x):\ x \in \tilde{\Omega}_{\infty}\}>0.$$ By the Lipschitz regularity of $u_\infty$, there exists $r>0$ such that $B_r(y_k) \subset  \tilde{\Omega}_{\infty}$. Thanks to the Ekeland Variational Principle (see for instance \cite[Chapter 5]{struwe}) there exists $z_k \in \tilde{\Omega}_{\infty}$ such that:
\begin{enumerate}
	\item[i)] $u_{\infty}(z_k) \to \xi$;
	\item[ii)] $|y_k-z_k| \to 0;$
	\item[iii)]  $\nabla u_{\infty}(z_k) \to 0.$
\end{enumerate}
Observe that iii) can be derived only because $B_r(y_k) \subset  \tilde{\Omega}_{\infty}$. In particular, $P_{\infty}(z_k) \to 2 F(\xi) \leq 0$, which is a contradiction with \eqref{jo2}.
\medskip

\textbf{Case 2:}  $\lim_{k \to +\infty} \mbox{dist}(x_{k},\partial\Omega)=0.$ \\
As in case 1, we consider $u_k$ extended by $0$ outside $\Omega$, so that $u_k: \R^n \to \R$ is a globally Lipschitz function. Observe that $u(x_{k})\rightarrow 0$ since $|\nabla u|$ is bounded by Lemma \ref{Le21}. Then,
\begin{equation} \label{jo3} |\nabla u(x_{k})|^{2} = P(x_k) - 2 F(u(x_k)) \to \beta -2F(0) >\kappa^{2}.\end{equation}
Denote $h_{k}:=\mbox{dist}(x_{k},\partial\Omega)\rightarrow0$ and assume that this distance is attained at $y_{k}\in\partial\Omega.$ Let
\[u_{k}(x):=\frac{1}{h_{k}}u(y_{k}+h_{k}x).\]
Observe that $u_k(0)=0$ and $\nabla u_k$ is uniformly bounded by Lemma \ref{Le21}. Therefore, we can take a subsequence such that $u_k$ converges uniformly in compact sets to a limit function $u_\infty \geq 0$, which is Lipschitz continuous (but possibly unbounded). Moreover, $\nabla u_{k}\stackrel{*}{\rightharpoonup}\nabla u_{\infty}$ in $L^{\infty}$.

Let us denote $\Omega_{\infty}= \{x \in \R^n: \ u_{\infty}(x) >0\}$. First, we show that in compact sets of $\Omega_{\infty}$, $u_k$ converges to $u_{\infty}$ in $C^{2, \alpha} $ sense and
$$ \Delta u_{\infty} =0, \ \ x \in \Omega_{\infty}.$$
Indeed, take $p \in \Omega_{\infty}$. By uniform convergence, there is $r>0$ such that $u_k >0$ in $B_{r}(p)$. Recall that $f(u_k)$ is uniformly Lipschitz continuous in $\R$: by interior regularity estimates, we conclude that $u_k \to u_{\infty}$ in $C^{2,\alpha}$ sense in compact sets of $B_r(p)$. Passing to the limit, $u_\infty$ is harmonic in $\Omega_\infty$.

Now, let $z_{k}:=\frac{x_{k}-y_{k}}{h_{k}},$ then we can assume that $z_{k}$ converges to $z_{\infty}$ since $|z_{k}|=1.$ The same argument as above works in $B_{r}(z_\infty)$ for any $r \in (0,1)$, and then $u_k \to u_{\infty}$ in $C^{2,\alpha}$ sense in compact sets of $B_1(z_\infty)$. Hence $u_\infty \geq 0$ is harmonic in $B_1(z_\infty)$. It is clear that, by \eqref{jo3},
\begin{equation}\label{eqg1}
 \kappa < |\nabla u(x_{k})|= |\nabla u_{k}(z_{k})| \to |\nabla u_{\infty}(z_\infty)| := a,
\end{equation}
as $k$ is large.

By the maximum principle we conclude that $u_{\infty}>0$ in $B_1(z_\infty)$, that is, $B_1(z_\infty) \subset  \Omega_{\infty}$.

Observe now that, for all $ x \in B_1(z_\infty)$,
\begin{align*}
      \beta &=|\nabla u_{k}(z_{k})|^{2}+2F(u_{k}(z_{k}))+o_{k}(1)\\
      &=|\nabla u(x_{k})|^{2}+2F(u(x_{k}))+o_{k}(1)\\
      &\geq |\nabla u(y_{k}+xh_{k})|^{2}+2F(u(y_{k}+xh_{k}))\\
      &=|\nabla u_{k}(x)|^{2}+2F(u(y_{k}+xh_{k}))
    \end{align*}
where $F(u(x_{k}))\rightarrow F(0)$ and $F(u(y_{k}+xh_{k}))\rightarrow F(0)$ since $h_{k}\rightarrow 0,u(y_{k})=0$.
As a consequence, $|\nabla u_{\infty}(z_\infty)| \geq |\nabla u_{\infty} (x)|$ for all $ x \in B_1(z_\infty)$. Since $u_{\infty}$ is harmonic, $|\nabla u_{\infty}|$ is sub-harmonic; indeed,
$$  \Delta |\nabla u_{\infty} |^2 = 2 |D^2 u_{\infty}|^2.$$
By the maximum principle $|\nabla u_{\infty} |$ is constant and $D^2 u_{\infty}=0$. Recall moreover that $u_\infty$ is Lipschitz continuous in $\R^n$ and $u_{\infty}(0)=0$.
Therefore, up to a rotation and a translation, $z_{\infty}=0$ and
\[u_{\infty}(x)=ax^{+}_{n},\]
where $a$ is given in \eqref{eqg1}. In particular, $\Omega_{\infty}$ is the upper half-space.

For any $\epsilon\in(0,1)$, by the uniform convergence of $u_{k}$ to $u_{\infty}$,
 \[|u_{k}-ax_{n}^+|<\frac{a\epsilon}{2}\]
in $B_{1}(0)$ for all large enough $k$. Following \cite[Lemma 4.4]{JK15}, let us consider the domain $D_{t},$ the perturbation of $B_{1}(0)\cap\{x_{n}>\epsilon\},$ given by
\[D_{t}=\{x\in B_{1}(0):x_{n}>\epsilon-t\eta (x')\},\]
where $x'=(x_{1},\cdot\cdot\cdot,x_{n-1}),0\leq\eta (x')\leq1$ is a smooth bump function supported in $|x'|\leq\frac{1}{2}$ with $\eta (x')=1$ for $|x'|\leq\frac{1}{4}.$ It is clear that $u_k >0$ in  $D_{0}$ for sufficiently large $k$.

\begin{figure}[htbp]
\centering
\includegraphics[width=5.5cm]{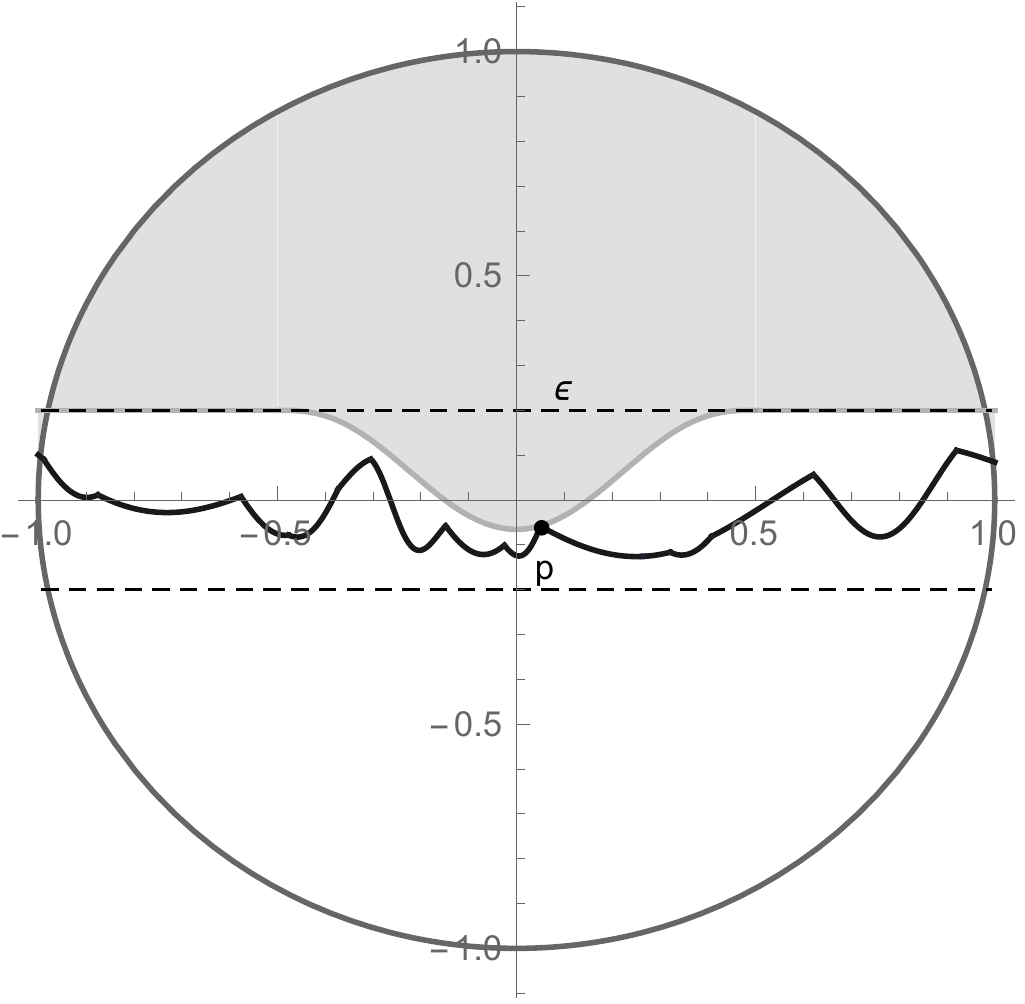}
\caption{The domains $D_{t}$ and $J$, and the point $p$.}
\end{figure}

Now, we denote a function $w$ which solves the following problem
\begin{equation*}
 \begin{cases}
 -\Delta w(x)= -\epsilon &\mbox{in $D_{t_{0}}$},\\
w(x)=ax_{n}-a\epsilon &\mbox{on $\partial B_{1}(0)\cap\{x_{n}>\epsilon\}$},\\
 w(x)=0&\mbox{on $ B_{1}(0)\cap\{x_{n}=\epsilon-t_{0}\eta (x')\}$},
\end{cases}
\end{equation*}
for some $0<t_{0}<2\epsilon,$ where $D_{t_{0}}\subseteq B_{2}(0) \cap \{ u_k>0\}$ will touch
$$J(u_{k}):=B_{2}(0) \cap \partial \{ u_k>0\} $$
at some point $p\in J(u_{k})\cap\{|x'|<\frac{1}{2}\}.$ Thus, $w(x)\leq u_{k}(x)$ in $D_{t_{0}}$ by the maximum principle,
\[|w_{\nu}(p)|\leq |(u_{k})_{\nu}(p)|=\kappa,\]
where $\nu$ is the outer normal to $\partial D_{t_{0}}$ at $p.$

Observe now that the boundary of $D_{t_0}$ is a small deformation in $C^k$ sense (of order $\e$) of the boundary of the half ball. By flattening the boundary we can pass to an elliptic operator posed on the half ball, where the coefficients converge to those of the Laplacian in $C^k$ sense. By using $C^1$ convergence of the soutions up to the boundary, we obtain that
\[|w_\nu(p)|=a+o_\epsilon(1).\]
We therefore conclude $a\leq \kappa$ since $\epsilon$ is arbitrary. This is a contradiction with \eqref{eqg1}.
\end{proof}

In order to conclude the proof of Theorem \ref{Th11} we only need to show the following result.

\begin{proposition} \label{esta} If, under the assumptions of Theorem \ref{Th11}, $P(x)=\alpha$ at a point $x\in\Omega,$ then $P$ is constant, $u$ is $1$-dimensional and $\Omega$ is a half-space. \end{proposition}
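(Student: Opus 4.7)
The plan has three main stages. First, propagate the equality $P = \alpha$ from $x_0$ to all of $\Omega$. Second, extract from the constancy of $P$ that $u$ has the one-dimensional form $u(x) = g(\vec a \cdot x)$. Third, exclude the slab case using $F \le 0$ and the Lipschitz character of $f$, forcing $\Omega$ to be a half-space.

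For Stage 1, the identity $P = |\nabla u|^2 + 2F(u)$ together with $F \le 0$ gives $|\nabla u(x_0)|^2 \ge \alpha$. When $\alpha > 0$ the set $S := \{x \in \Omega : P(x) = \alpha\}$ consists entirely of regular points of $u$, and Lemma \ref{LedefP} combined with the strong maximum principle shows $S$ is both closed (by continuity) and open (by propagation from any local interior maximum), hence $S = \Omega$. When $\alpha = 0$ the result reduces to the classical Caffarelli--Garofalo--Segala rigidity \cite{CGS94}, whose local argument --- strong maximum principle at regular points together with the observation that at a critical point with $P = 0$ one has $F(u) = f(u) = 0$, which allows unique continuation --- applies on $\Omega$. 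This $\alpha = 0$ sub-case is the delicate one, since the operator $L$ degenerates at critical points of $u$.

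For Stage 2, differentiating $P \equiv \alpha$ yields, via $D_i P = 2\sum_j D_{ij} u \, D_j u + 2 f(u) D_i u$, the relation $D^2 u \cdot \nabla u = -f(u) \nabla u$: at every regular point $\nabla u$ is an eigenvector of $D^2 u$ with eigenvalue $-f(u)$. On the other hand, $\Delta P = 2|D^2 u|^2 - 2 f^2(u) = 0$ forces $|D^2 u|^2 = f^2(u)$. Since one eigenvalue of $D^2 u$ is $-f(u)$ and the sum of all squared eigenvalues already equals $f^2(u)$, every other eigenvalue must vanish, so $D^2 u = \mu\,\nabla u \otimes \nabla u$ with $\mu = -f(u)/|\nabla u|^2$. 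A short differentiation then shows $\partial_i \bigl(D_j u/|\nabla u|\bigr) = 0$ at regular points, so the unit vector $\hat a = \nabla u/|\nabla u|$ is locally constant; on each connected component of the regular set it equals a fixed direction $\vec a$, and $u(x) = g(\vec a \cdot x)$ for a smooth function $g$ satisfying $g'' + f(g) = 0$. When $\alpha > 0$ the regular set is all of $\Omega$; when $\alpha = 0$, the smoothness of $u$ lets the one-dimensional representation be extended globally.

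For Stage 3, the conditions $u = 0$ on $\partial\Omega$ and $u > 0$ in $\Omega$ force $\Omega$ to be of the form $\{\vec a \cdot x \in J\}$ where $J$ is a maximal interval on which $g > 0$: $J$ is either a half-line (half-space case) or a bounded interval (slab case). In the slab case $g$ would attain an interior positive maximum at some $t^*$ with $g'(t^*) = 0$, giving $P = 2F(g(t^*)) \le 0$ there; combined with $P \equiv \alpha \ge 0$ and $F \le 0$ this forces $F(g(t^*)) = 0$, which is a global maximum of $F$, hence $f(g(t^*)) = 0$. The ODE $g'' + f(g) = 0$ with initial data $(g(t^*),0)$ then admits the constant solution $g \equiv g(t^*)$, and by Lipschitz uniqueness (since $f \in C^1$) this must be the only solution, contradicting $g = 0$ at the endpoints of $J$. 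So $J$ is unbounded and $\Omega$ is a half-space.
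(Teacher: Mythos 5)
Your proposal is correct and reaches the conclusion along the same overall roadmap as the paper (propagate $P\equiv\alpha$, deduce one-dimensionality, exclude the slab), but the middle stage is genuinely different. The paper's Step 1 is a self-contained version of what you delegate to \cite{CGS94}: at a hypothetical critical point with $P=\alpha$ one gets $\alpha=2F(u_0)\le 0$, hence $\alpha=F(u_0)=0$, and then $F\in C^2$ gives $F(u)=O((u-u_0)^2)$, so a Gronwall estimate $|\varphi'|\le C|\varphi|$ along segments shows the level set $\{u=u_0\}$ is open and $u$ is constant --- a contradiction; this removes critical points once and for all and makes the maximum-principle propagation uniform in $\alpha$, rather than splitting into $\alpha>0$ and the ``delicate'' $\alpha=0$ case. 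For the one-dimensionality the paper substitutes $v=G(u)$ with $G(u)=\int_{u_0}^{u}(\alpha-2F(s))^{-1/2}\,ds$, so that $\Delta v=0$ and $|\nabla v|^2\equiv 1$, forcing $v$ affine; your route instead reads off from $\nabla P=0$ and $\Delta P=2|D^2u|^2-2f(u)^2=0$ that $D^2u$ is rank one with $\nabla u$ as eigenvector of eigenvalue $-f(u)$, whence $\nabla u/|\nabla u|$ is constant. Both computations are correct (I checked your identity $\partial_i(D_ju/|\nabla u|)=0$); the paper's substitution buys an explicit formula for the profile $g=G^{-1}$ and avoids the eigenvalue bookkeeping, while yours is more elementary and closer in spirit to the original Modica--CGS Hessian analysis. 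Finally, your slab exclusion via ODE uniqueness at an interior maximum of $g$ is a one-dimensional re-derivation of the paper's Step 1, which the paper simply invokes at that point (``$u$ has no critical points''). No gaps, only the mild incompleteness of outsourcing the $\alpha=0$ propagation to the literature instead of running the level-set argument explicitly.
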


\begin{proof}

The proof of this proposition follows the ideas of \cite{CGS94}, and is divided in two steps.

\medskip

\textbf{Step 1:}  If $P(x_0)= \alpha$, then $\nabla u(x_0) \neq 0$.

Reasoning by contradiction, if $\nabla u(x_{0})=0,$ then $0 \leq \alpha=2F(u(x_{0})) \leq 0$, so that $\alpha=0= F(u_0)$, where $u_0 = u(x_0)$. We consider the set
\[U=\{x\in \Omega:u(x)=u_{0}\},\]
where $u_{0}=u(x_{0}).$ Clearly, $U$ is closed and $U\neq\emptyset$. Then we take $x_{1}\in U$ and consider the function $\varphi(t)=u(x_{1}+tw)-u_{0},$ where $w\in \mathbb{S}^{n-1}$ is arbitrarily fixed. We have
\[\varphi'(t)=\nabla u(x_{1}+tw)w.\]
Then
\begin{align*}
      |\varphi'(t)|^{2}&\leq|\nabla u(x_{1}+tw)|^{2}\leq\alpha-2F(u(x_{1}+tw))\\
      &=2F(u_{0})-2F(u(x_{1}+tw)).
    \end{align*}
Since $F(u_{0})=0$, then $F'(u_{0})=0$ and $F''(u_{0})\leq0$ by the $C^2$ regularity of $F$. Hence $F(u)=O((u-u_{0})^{2})$ as $|u-u_{0}|\rightarrow0.$ Therefore we can get that
   $$ |\varphi'(t)|\leq C|\varphi(t)|,$$
 as $t$ small enough. It follows that $\varphi\equiv0$ in $[-\epsilon,\epsilon],$ for some $\epsilon>0,$ since $\varphi(0)=0.$ It shows that the set $U$ given above is open. Then $U=\Omega$, which implies that $u$ is a constant, a contradiction.

 \medskip

 \textbf{Step 2:}  Conclusion.

Then, we have that $\nabla u(x)\neq0$ for any $x\in\Omega$ with $P(x) = \alpha$. By the maximum principle applied to $P$, we conclude that the set
$$ \{ x \in \Omega: \ P(x)= \alpha\}$$ is a non-empty open set, which is also closed by continuity. Then $P(x)= \alpha$ for all $x \in \Omega$. In particular, $u$ has no critical points in $\Omega$.

We now set $v=G(u),$ where $G\in C^{2}(\mathbb{R})$ is suitably determined. By the straightforward computation, one has
\begin{equation} \label{otra}
   \Delta v=G''(u)|\nabla u(x)|^{2}+G'(u)\Delta u =G''(u)(\alpha-2F(u))-G'(u)F'(u).
\end{equation}
   Then we have that
   \begin{equation}\label{eq41}
   \Delta v=0,
   \end{equation}
   if we choose
   \[G(u)=\int_{u_{0}}^{u}(\alpha-2F(s))^{-\frac{1}{2}}ds,\]
   for some fixed $u_{0}\in u(\Omega).$ Observe that $\alpha > 2 F(u_0)$ for any $u_0$ in the range of $u$, so that the integral defining $G$ is not singular. With this choice, one has that
   \begin{equation}\label{eq42}
    |\nabla v|^{2}=G'(u)^{2}|\nabla u|^{2}=1.
\end{equation}
We can infer that $v(x)=\vec{a}\cdot x+b$ for some $\vec{a}\in \mathbb{R}^{n}$ with $|\vec{a}|=1$ and $b\in \mathbb{R}$.
Then, we can obtain that $u(x)=G^{-1}(v(x))=g(\vec{a}\cdot x+b)$ with $g=G^{-1},$ since $G$ is invertible. Then $u$ is $1$-dimensional. A priori, $\Omega$ could be the inner space between two parallel hyperplanes, but this is impossible since $u$ has no critical points by Step 1. This concludes the proof. \end{proof}

\medskip

We now prove Theorems \ref{Th12} and \ref{coro}.

\begin{proof}[Proof of Theorem \ref{Th12}]

Since $\kappa\neq 0$, we have that $u$ has no critical points close to $\partial \Omega$. By the assumption that $P$ attains its maximum at $\partial\Omega,$ then
    \[P_{\nu}\geq0,\quad \mbox{on $\partial\Omega.$}\]
Let $H$ be the mean curvature of $\partial\Omega$ at a given point. On the other hand,
    \begin{align*}
      \frac{\partial P}{\partial\nu} &=\frac{\partial}{\partial\nu}\left(|\nabla u|^{2}+2F(u)\right) \\
      & =2\frac{\partial u}{\partial\nu}\frac{\partial^{2} u}{\partial\nu^{2}}+2f(u)\frac{\partial u}{\partial\nu}\\
      & =2\frac{\partial u}{\partial\nu}\left(\Delta u-(n-1)H\frac{\partial u}{\partial\nu}+f(u)\right)\\
       & =-2(n-1)\Big|\frac{\partial u}{\partial\nu}\Big|^{2}H,
    \end{align*}
    by using the fact that $\frac{\partial^{2} u}{\partial\nu^{2}}=\Delta u-(n-1)H\frac{\partial u}{\partial\nu}$ on $\partial\Omega,$ see \cite[Section 5.4]{S81}. Therefore,
  \[H\leq0.\]
If $H(p)=0$ for some $p \in \partial \Omega$, one has that $\frac{\partial P}{\partial\nu}(p)=0$. By Hopf's lemma, we can get that
$P$ is constant, at least in a neighborhood of $p$. We can now argue as in Step 2 of the proof of Proposition \ref{esta} (see \eqref{otra}, \eqref{eq41}, \eqref{eq42}) to conclude that $u$ is $1$-dimensional in such neighborhood. By unique continuation, $u$ is $1$-dimensional. As a consequence, $\Omega$ is either a half-space or a slab.
\end{proof}

\begin{proof}[Proof of Theorem \ref{coro}] By Theorem \ref{Th11}, we have that $P(x) \leq \kappa^{2}+2F(0)$ for all $x \in \Omega$.

Let us consider first the case $\kappa \neq 0$. In this case we can apply Theorem \ref{Th12} to deduce $H \leq 0$. Moreover, if $H(p)=0$ at some point of $\partial \Omega$, then $P$ is constant and $u$ is $1$-dimensional. Moreover, by Step 1 of Proposition \ref{esta}, $u$ has no critical points, and then $\Omega$ is a half-space.

We now address the case $\kappa=0$. Observe that:
$$ 0 \leq \kappa^2 + 2 F(0) = 2 F(0) \leq 0.$$
As a consequence, $F(0)=0$. Since $F$ is nonpositive, this implies that $f(0)=F'(0)=0$.  But then we obtain a contradiction with the Hopf lemma and this concludes the proof.
\end{proof}


\end{document}